\newcommand{\real}{{\rm Re}\,}
\newcommand{\imag}{{\rm Im}\,}
\newcommand{\C}{\mathbb C}
\newcommand{\R}{\mathbb R}
\newtheorem{theorem}{Theorem}[section]
\newtheorem{lemma}[theorem]{Lemma}
\theoremstyle{remark}
\newtheorem{remark}[theorem]{Remark}
\theoremstyle{example}
\theoremstyle{definition}
\numberwithin{equation}{section}
\begin{document}

\begin{abstract}
This paper is devoted to Riemann-Hilbert problems with constraints. We obtain results characterizing the existence of 
solutions as well as the dimension of the solution 
space in terms of certain indices. The results of this paper are particularly adapted to the study of stationary discs 
attached to CR manifolds.  
\end{abstract} 

\title[Riemann-Hilbert  problems with constraints]{Riemann-Hilbert  problems with constraints}

\author{Florian Bertrand and Giuseppe Della Sala}

\subjclass[2010]{32A30, 30E25, 35Q15}

\keywords{}
\maketitle

\section*{Introduction}

Very recently, the theory of stationary discs developed by Lempert \cite{le} (see also \cite{hu, ce1, tu}), has found important applications to the study of jet determination problems for CR maps between finitely smooth real submanifolds \cite{be-bl,  be-bl-me, be-de, be-de-la}.  The existence and geometric properties of stationary discs are deeply connected to 
non-linear Riemann-Hilbert problems. This connection was developed in the seminal works of Forstneri\v{c} \cite{fo} and Globevnik \cite{gl1,gl2} in their study of analytic discs attached to totally real submanifolds.   

In several applications \cite{be-bl-me, be-de-la}, one needs to study families of stationary discs passing through a prescribed point; in such cases, standard techniques developed in \cite{fo, gl1,gl2} cannot be applied directly and versions of Riemann-Hilbert problems with pointwise constraints are needed.  To the best of our knowledge, relevant results are not covered in the literature on Riemann-Hilbert problems. The present paper is devoted to this problem and our main results Theorem \ref{theorh} and Theorem \ref{theorh2} provide the tools required for the  construction of stationary discs with pointwise constraints as in \cite{be-bl-me, be-de-la}.

\vspace{0.5cm}

\noindent  {\it Acknowledgments.} Research of the two authors was supported by a fellowship at the Center for Advanced Mathematical Sciences (CAMS).  
The second author was partially supported by the Austrian Science Fund FWF grant P24878 N25.

\section{Preliminaries}    
Let $\Delta$ be the  unit disc in $\C$ and let $b\Delta$ be its boundary.

\subsection{Function spaces}
Let $k\geq 0$ be an integer and let $0< \alpha<1$.
We denote by $\mathcal C^{k,\alpha}=\mathcal C^{k,\alpha}(b\Delta,\R)$ the space of real-valued functions  defined on $b\Delta$ of class 
$C^{k,\alpha}$ endowed with its usual norm. We define $\mathcal C^{k,\alpha}_e$ (resp. $\mathcal C^{k,\alpha}_o$) to be the closed subspace of $\mathcal C^{k,\alpha}$ given by the even (resp. odd) functions, that is, the functions $v\in \mathcal C^{k,\alpha}$ such that $v(-\zeta) = v(\zeta)$ (resp. $v(-\zeta) = - v(\zeta)$ ) for all $\zeta\in b\Delta$.
We set $\mathcal C_\C^{k,\alpha} = \mathcal C^{k,\alpha} + i\mathcal C^{k,\alpha}$. Hence $v\in \mathcal C_\C^{k,\alpha}$ if and only if 
$\real v, \imag v \in \mathcal C^{k,\alpha}$. 
We denote by $\mathcal A^{k,\alpha}$ the subspace of $\mathcal C_{\C}^{k,\alpha}$ consisting of functions $f:\overline{\Delta}
\rightarrow \C$, holomorphic on $\Delta$ with trace on 
$b\Delta$ belonging to $\mathcal C_\C^{k,\alpha}$.

Let $m\geq 0$ be an integer. We define $\mathcal A^{k,\alpha}_{0^m}$
to be the subspace of 
$\mathcal C_{\C}^{k,\alpha}$  
consisting of the functions that can be written as $(1-\zeta)^m f$, with  $f\in \mathcal A^{k,\alpha}$. 
Finally, we denote by $\mathcal C_{0^m}^{k,\alpha}$ the subspace of $\mathcal C^{k,\alpha}$ consisting of elements that can be written as $(1-\zeta)^m v$ with $v\in \mathcal C_\C^{k,\alpha}$. 

For technical reasons we will also need  the following subspaces  of $\mathcal C_{\mathbb C}^{k,\alpha}$  
$$\mathcal R_m = \{v \in \mathcal C_{\mathbb C}^{k,\alpha} \ | \ v(\zeta) = (-1)^m \zeta^{-m} \overline {v(\zeta)} \ \ \forall \  \zeta \in b\Delta \}.$$ 
Their relation with $\mathcal C_{0^m}^{k,\alpha}$ is given by the following elementary lemma
\begin{lemma}\label{subspaces}
\
 \begin{enumerate}[(i)] 

\item  The map $\tau_m: \mathcal C_{0^m}^{k,\alpha} \to \mathcal C_{\mathbb C}^{k,\alpha}$ defined
 by $\tau_m ((1 - \zeta)^m v ) = v$ is an isomorphism between $\mathcal C_{0^m}^{k,\alpha}$ and $\mathcal R_m$;
\item if $m = 2m'$ is even, the map $v \mapsto \zeta^{m'}v$ induces an isomorphism between $\mathcal R_m$ and $\mathcal R_0 = \mathcal C^{k,\alpha}$;
\item if $m = 2m'+1$ is odd, the map $v \mapsto \zeta^{m'}v$ induces an isomorphism between $\mathcal R_m$ and $\mathcal R_1$.
\end{enumerate}
Furthermore, if $m$ is odd the map $v(\zeta) \mapsto  i \zeta^m v(\zeta^2)$ sends $\mathcal R_m$ isomorphically to $\mathcal C^{k,\alpha}_o$.
\end{lemma}
\begin{proof}
A function $v\in \mathcal C_{\mathbb C}^{k,\alpha}$ is in the image of $\tau_m$ exactly when $(1 - \zeta)^m v \in \mathcal C^{k,\alpha}$, that is, \ 
$$(1 - \zeta)^m v= (1 - \overline \zeta)^m \overline v = (-1)^m \zeta^{-m} (1 - \zeta)^m \overline v,$$ which gives the first point.

If $m = 2m'$ and $v\in \mathcal R_m$, $u = \zeta^{m'} v$, we have 
$$u = \zeta^{m'}v = \zeta^{m'} \zeta^{-2m'} \overline v = \zeta^{-m'}\overline v = \overline u,$$
 hence $u\in \mathcal C^{k,\alpha}$. Note that this series of equalities shows that  the map $v \mapsto \zeta^{m'}v$ is onto.

If $m = 2m'+1$ and $v\in \mathcal R_m$, $u = \zeta^{m'} v$, we have 
$$u = \zeta^{m'}v = -\zeta^{m'}\zeta^{-2m'+1} \overline v = - \overline \zeta \zeta^{-m'}\overline v = -\overline {\zeta u},$$ hence $u\in \mathcal R_1$.

Finally, letting $u(\zeta) = i \zeta^m v(\zeta^2)$ with $v\in \mathcal R_m$ and $m$ odd we have 
$$u(\zeta) = i \zeta^m v(\zeta^2) = -  i \zeta^m \zeta^{-2m} \overline {v(\zeta^2)} = -i \zeta^{-m} \overline {v(\zeta^2)} = 
\overline {u(\zeta)},$$
 and furthermore $u(-\zeta) = (-1)^m u(\zeta) = - u(\zeta)$, hence $u\in \mathcal C^{k,\alpha}_o$.
 
 We now show that this correspondence is an isomorphism by giving an explicit expression of the inverse. Write any $u\in \mathcal C^{k,\alpha}_o$ as
$$u(\zeta)=\sum_{j\in \mathbb Z} a_j \zeta^{2j+1} \mbox{ with } \overline{a_j}=a_{-j-1} \mbox{ for all } j\in \mathbb Z,$$ 
 and define
$$v(\zeta)= -\sum_{\ell\in \mathbb Z}ia_{\ell+(m-1)/2}\zeta^{\ell}.$$ 
Then
$$u(\zeta)/i\zeta^m= -\sum_{j\in \mathbb Z} ia_j \zeta^{2j-m+1}=-\sum_{\ell\in \mathbb Z}ia_{\ell+(m-1)/2}\zeta^{2\ell}= v(\zeta^2),$$
that is, $u(\zeta) = i \zeta^m v(\zeta^2)$. Moreover

$$-\zeta^{-m}\overline{v(\zeta)}= -\sum_{\ell\in \mathbb Z}i\overline a_{\ell+(m-1)/2}\zeta^{-\ell-m}= $$ $$= -\sum_{h\in \mathbb Z}i\overline a_{-h-(m+1)/2}\zeta^{h}=-\sum_{h\in \mathbb Z}i a_{h+(m-1)/2}\zeta^{h}=v(\zeta)$$ 
which shows that $v\in \mathcal R_m$.
 
\end{proof}

\subsection{Birkhoff factorization and indices}
We refer, for instance, to the monography of N.P. Vekua \cite{ve} for a complete exposition on the Birkhoff factorization and partial indices. We will recall the basic facts that we need. 
Let $N>0$ be an integer. We denote by $GL_N(\C)$ the general linear group on $\C^N$.  Let $G: b\Delta \to  GL_N(\C)$ be a smooth map. One considers a Birkhoff factorization  of $-\overline{G(\zeta)}^{-1}G(\zeta)$, that is, some smooth  maps $B^+:\bar{\Delta}\to GL_N(\C)$ and 
$B^-:(\C \cup \infty)\setminus\Delta\to GL_N(\C)$  such that for all $\zeta\in b\Delta$
$$-\overline{G(\zeta)}^{-1}G(\zeta)=
B^+(\zeta)
\left(\begin{array}{cccc}\zeta^{\kappa_1}& & & (0) \\ &\zeta^{\kappa_2} & & \\ & & \ddots & \\ (0)& & &\zeta^{\kappa_{N}}\end{array}\right)
B^-(\zeta)\,$$
where $B^+$ and $B^-$ are holomorphic on $\Delta$ and $\C \setminus \overline{\Delta}$ respectively. According to J. Globevnik (see Lemma 5.1 in \cite{gl1}), one can find $B^+$ and $B^-$ in such a way that 
$B^+=\Theta$ and  $B^-=\overline{\Theta^{-1}}$, where  $\Theta:\bar{\Delta}\to GL_N(\C)$ is a smooth map. 
The integers $\kappa_1, \dots, \kappa_N$  are called the {\it partial indices} of 
$-\overline{G}^{-1}G$ and  {\it the Maslov index} of $-\overline{G}^{-1}G$ is their sum 
$\kappa=\sum_{j=1}^N\kappa_j$. The partial indices are unique up to order (see for instance Section 3 in \cite{gl1}).  
We also recall that the Maslov index coincides with the winding number 
at the origin of the map $\det\left(-\overline{G}^{-1}G\right)$ and hence is even.

\section{Main results} 
\subsection{Linear Riemann-Hilbert problems with homogeneous pointwise constraints}

\begin{theorem}\label{theorh}
Let $k,m\geq 0$ be integers and let $0<\alpha<1$. 
Consider the following operator
\begin{equation*}
L: \left(\mathcal A^{k,\alpha}_{0^m}\right)^{N}
\to \left(\mathcal C_{0^m}^{k,\alpha}\right)^{N}
\end{equation*}
defined by
\begin{equation*}
L({\bm f})=2\real \left[\overline{G}{\bm f}\right],
\end{equation*}
where $G: b\Delta \to  GL_N(\C)$ is smooth.
Denote by $\kappa_1, \dots, \kappa_N$ and by $\kappa$ the  partial indices and  the Maslov index of 
$-\overline{G}^{-1}G$. Then

\begin{enumerate}[(i)] 
\item The map $L$ is onto if and only if $\kappa_j\geq m-1$ for all $j=1,\cdots,N$.
\item Assume that $L$ is onto. 
Then the kernel of $L$ has real dimension $\kappa+N-Nm$.   

\end{enumerate}
\end{theorem}
The proof will be given in Section 3.

\begin{remark}
In the context of Theorem \ref{theorh}, in case the partial indices $-\overline{G}^{-1}G$ are greater than or equal to $m-1$, it follows 
that $L$ is a Fredholm operator of Fredholm index $\kappa+N-Nm$, where again $\kappa$ is the Maslov index of  
$-\overline{G}^{-1}G$.
\end{remark}

\begin{remark}\label{remga}

The Riemann-Hilbert problem has important applications to the study of analytic discs attached to non degenerate real 
submanifolds of $\C^N$ \cite{le, fo, gl1, ce1, ce2}, and in particular to stationary discs; these are special analytics discs attached to a given 
hypersurface $M \subseteq \C^{n+1}$ which lift to the cotangent bundle as  discs - with a pole of order at most one - attached to the conormal bundle 
of $M$. In case $M$ is Levi-degenerate, its conormal bundle is no longer totally real and admits complex tangencies \cite{web}; the 
existence of smooth stationary discs attached to such a hypersurface and its  pertubations is therefore unclear. In \cite{be-de}, we have introduced 
generalized stationary discs by allowing the pole of their lifts to be of higher order. In order to construct generalized stationary discs attached to small 
pertubations of a Levi-degenerate hypersurface $M \subseteq \C^{n+1}$, one considers the corresponding Riemann-Hilbert problem whose linearization along an initial disc is of the form 
${\bm f} \mapsto 2\real [\overline{G}{\bm f}]$ where the matrix map $G(\zeta)$ is no longer invertible on $b\Delta$ (since the lift of the initial disc 
passes through complex points). A careful study of $G$ shows that one can factor out the singularities of $G$ and associate a related Riemann-Hilbert 
problem ${\bm f} \mapsto 2\real [\overline{\tilde{G}}{\bm f}]$, where $\tilde{G}(\zeta)$ is now invertible on $b\Delta$ and where ${\bm f}$ has new 
pointwise constraints. For more details and a clear application of Theorem \ref{theorh}, the reader is invited to see Theorem 4.2 in \cite{be-de-la} and 
its proof.

%
%Theorem \ref{theorh} is particularly adapted to study Riemann-Hilbert problems with special singularities, namely 
%of  the form
%\begin{equation}\label{eqrh}
%\real [\overline{G}{\bm f}]=\varphi \ \mbox{ on } \ b\Delta,
%\end{equation}
%where $\det G$ vanishes at $1$ (see \cite{be-de-la} p. 12). More precisely, consider a real hypersurface $M$ of $\C^{n+1}$. In case the hypersurface is Levi degenerate, the theoi   
% 
%
% 
% 
%
%
%F.D. Gakhov \cite{ga} p.107-113 has studied singular Riemann problems of the form 
%\begin{equation}\label{eqr}
%{\bm f}^+=H{\bm f}^-+\psi, \ \mbox{ on } \ b\Delta,
%\end{equation}
%where $\det H$ has zeros or poles.  It is classical to solve the usual Riemann-Hilbert problem by transforming it in a Riemann problem. By doing so, the $H$ appearing in (\ref{eqr}) is $-\overline{G^{-1}(\zeta)}G(\zeta)$; note that the determinant of matrix $-\overline{G^{-1}(\zeta)}G(\zeta)$ is of modulus $1$ and does not have zeros as opposed to the 
% case considered by F.D. Gakhov.   

\end{remark}

\subsection{Linear Riemann-Hilbert problems with pointwise constraints}
Let $G: b\Delta \to  GL_N(\C)$ be a smooth map of the form
$$
G(\zeta)=\left(\begin{array}{cccc}G_1(\zeta)& & & (*) \\ &G_2(\zeta) & & \\ & & \ddots & \\ (0)& & &G_r(\zeta)\end{array}\right),
$$
where $G_j(\zeta) \in GL_{N_j}(\C)$ for all $j=1,\cdots,r$, for all $\zeta \in b\Delta$ and where $N_j$'s are positive integers such that their sum is $N$. Let $k,m_1,\ldots,m_r\geq 0$ be integers and let $0<\alpha<1$. Consider the following operator 
$$L: \prod_{l=1}^{r} \left(\mathcal{A}^{k,\alpha}_{0^{m_j}}\right)^{N_j}
\to \prod_{l=1}^{r} \left(\mathcal C_{0^{m_j}}^{k,\alpha}\right)^{N_j}$$
defined by
\begin{equation*}
L({\bm f})=2\real \left[\overline{G}{\bm f}\right].
\end{equation*}
Note that we are implicitly assuming that $G$ is such that $L\left(\prod_{l=1}^{r} \left(\mathcal{A}^{k,\alpha}_{0^{m_j}}\right)^{N_j}\right)\subset \prod_{l=1}^{r} \left(\mathcal C_{0^{m_j}}^{k,\alpha}\right)^{N_j}$. 
Denote by $\tilde{G}(\zeta)$ the following matrix
$$
\tilde{G}(\zeta)=\left(\begin{array}{cccc}G_1(\zeta)& & & (0) \\ &G_2(\zeta) & & \\ & & \ddots & \\ (0)& & &G_r(\zeta)\end{array}\right)
$$
and by $\tilde{L}$ the corresponding operator. For  $j=1,\cdots,r$ we denote by $\kappa_{l}^{j}$, $l=1,\cdots, N_j$, the partial indices of  $-\overline{G_j}^{-1}G_j$ and by $\kappa$ the 
Maslov index of $-\overline{G}^{-1}G$ and of $-\overline{\tilde{G}}^{-1}\tilde{G}$. Note that the fact that $\tilde{L}$ is onto implies that $L$ is onto and also  that  the kernels of $L$ and  $\tilde{L}$ are of the same dimension. 
A direct application of  Theorem \ref{theorh} gives:
\begin{theorem}\label{theorh2}
Under the above assumptions:
\begin{enumerate}[(i)]
\item If $\kappa_{l}^j \geq m_j-1$ for all \ $l=1,\cdots,N_j$  
and all $j=1,\cdots,r$ then the map $L$ is onto.  
\item Assume that $L$ is onto. Then the kernel of $L$ has real dimension $\kappa+N-\sum_{j=1}^{r}N_jm_j$. 
\end{enumerate}
\end{theorem}

\begin{remark}
In \cite{be-bl-me}, the first author, L. Blanc-Centi and F. Meylan construct classical stationary discs attached to pertubations of a given non-degenerate generic quadric and satisfying a pointwise constraint (the latter is essential for applications to jet determination problems). This construction is done using Theorem \ref{theorh2} (see Theorem 3.1 in \cite{be-bl-me} and its proof for more details).  
\end{remark}

\section{Proof of Theorem \ref{theorh}}

We start with a few more observations.
Following \cite{gl2}, one can find a smooth map $V:b\Delta \to GL_N(\R)$ such that
$V\overline {G} = M(-i\Theta)^{-1}$, where $M$ has a special block-diagonal form (see (\ref{eqM})), where 
$\Theta: \overline{\Delta} \to GL_N(\C)$ is smooth and holomorphic on $\Delta$, and in particular 
\begin{equation*}
\begin{array}{lll}
-\overline{G(\zeta)}^{-1}G(\zeta)& = &  \Theta(\zeta)M(\zeta)^{-1}\overline{M(\zeta)} \overline{\Theta(\zeta)^{-1}}\\
\\
& = &\Theta(\zeta)\left(\begin{array}{cccc}\zeta^{\kappa_1}& & & (0) \\ &\zeta^{\kappa_2} & & \\ & & \ddots & \\ (0)& & &\zeta^{\kappa_{N}}\end{array}\right)\overline{\Theta(\zeta)}^{-1}.\\
\\
\end{array}
\end{equation*}
Define the operator 
$$  
\tilde{L}: \left(\mathcal A^{k,\alpha}_{0^m}\right)^{N}
\to \left(\mathcal C_{0^{m}}^{k,\alpha}\right)^{N}$$
by setting
\begin{equation*}
\tilde{L}({\bm f})=2\real \left[M{\bm f}\right].
\end{equation*}
Since $\Theta: \overline{\Delta} \to GL_N(\C)$ is smooth and holomorphic on $\Delta$, the map 
$(i\Theta)^{-1}$ is an   isomorphism of  
$ \left(\mathcal A^{k,\alpha}_{0^m}\right)^{N}
$ onto itself. Moreover since $V$ is valued in $GL_N(\R)$, the map $\left(\mathcal C_{0^{m}}^{k,\alpha}\right)^{N} \to \left(\mathcal C_{0^{m}}^{k,\alpha}\right)^{N}$ defined by ${\bm \varphi} \mapsto V{\bm \varphi}$ is also an isomorphism. Therefore the kernels of $L$ and  $\tilde{L}$ are of the same dimension and 
$L$ is onto if and only if $\tilde{L}$ is onto; the operators $L$ and  $\tilde{L}$ are both Fredholm and of the same index. We will prove Theorem \ref{theorh} for $\tilde{L}$. 
\vspace{0.5cm}

We first prove $(i)$. Since $\kappa$ is even, the number of odd partial indices is even. Without loss of generality, suppose that $\kappa_j$ is odd for $j=1,\cdots,2s$ and that $\kappa_j$ is even for 
$j=2s+1,\cdots,N$. According to \cite{gl2}, the matrix $M$ can be written as  
\begin{equation}\label{eqM}
M(\zeta)=\left(\begin{array}{ccccccc}P_1(\zeta)& & &  & &  & (0) \\  &   \ddots & & & & &  \\ & & &P_{s}(\zeta)&  & &  \\
& & & &  \zeta^{-\frac{\kappa_{2s+1}}{2}}  &  & \\ &  & & &    &  \ddots & \\ (0) &  & & & & &  
\zeta^{-\frac{\kappa_N}{2}}\\
\end{array}\right),
\end{equation}
where 
$$P_j(\zeta)= \left(\begin{array}{cc} 1+\zeta & -i(1-\zeta) \\  i(1-\zeta)& 1+\zeta \end{array}\right)
\left(\begin{array}{cc} \zeta^{-\frac{\kappa_{2j-1}+1}{2}} & 0 \\ 0 & \zeta^{-\frac{\kappa_{2j}+1}{2}} \end{array}\right)$$ 
for $j=1,\cdots,s$. Note that in case there are no odd partial indices, the matrix $M(\zeta)$ is diagonal with entries $\zeta^{-\frac{\kappa_j}{2}}$. Part $(i)$ is a consequence of the following two lemmas

\begin{lemma}\label{lemsur1}
Let $r$ be an integer. Consider the operator $L: \mathcal A^{k,\alpha}_{0^m} \to 
\mathcal{C}_{0^m}^{k,\alpha}$ defined by 
$$L(f)=2\real [\zeta^{-r}f]|_{b\Delta}.$$
Then $L$ is onto if and only if  $2r \geq m-1$. 
\end{lemma}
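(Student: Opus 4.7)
The plan is to translate $L(f) = \varphi$ into a simpler Riemann--Hilbert identity on $\mathcal{A}^{k,\alpha}$ and then analyze solvability via Fourier expansion on $b\Delta$.

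First I would use Lemma \ref{subspaces}(i) to write $\varphi = (1-\zeta)^m v$ with $v\in\mathcal R_m$ and $f = (1-\zeta)^m g$ with $g\in \mathcal A^{k,\alpha}$. Using the identity $(1-\bar\zeta)^m = (-1)^m\zeta^{-m}(1-\zeta)^m$ on $b\Delta$, one computes
$$2\real[\zeta^{-r}(1-\zeta)^m g] = (1-\zeta)^m\left[\zeta^{-r}g + (-1)^m \zeta^{r-m}\bar g\right];$$
dividing by $(1-\zeta)^m$ (valid off $\zeta=1$ and extended by continuity) and multiplying by $\zeta^r$, the equation $L(f)=\varphi$ becomes
$$g + (-1)^m \zeta^s \bar g = 2\zeta^r v \quad\text{on } b\Delta,\qquad s := 2r - m.$$
Using $\bar v = (-1)^m \zeta^m v$, a direct check shows that $w := 2\zeta^r v$ ranges bijectively over the closed subspace $\mathcal S_s := \{w \in \mathcal C_\C^{k,\alpha} : w = (-1)^m\zeta^s\bar w\}$; since multiplication by $2\zeta^r$ is a Banach-space isomorphism, surjectivity of $L$ is equivalent to surjectivity of the operator $T:\mathcal A^{k,\alpha}\to\mathcal S_s$, $T(g) := g + (-1)^m \zeta^s \bar g$ (which is exactly the operator of Lemma \ref{lemrhl}, restricted to its natural image).

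Next I would analyze $T$ via Fourier series. With $g = \sum_{n\ge 0}a_n\zeta^n$ and $w = \sum_{n\in\mathbb Z} c_n\zeta^n$ (subject to $c_n = (-1)^m\bar c_{s-n}$), matching coefficients yields, for each $k\in\mathbb Z$,
$$c_k = a_k\,\chi_{\{k\ge 0\}} + (-1)^m\bar a_{s-k}\,\chi_{\{k\le s\}}.$$
If $s\ge -1$, the sets $\{k\ge 0\}$ and $\{k\le s\}$ cover $\mathbb Z$, the defining symmetry of $\mathcal S_s$ renders the equations at paired indices $\{k, s-k\}$ mutually equivalent, and one can solve explicitly for $(a_n)$. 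If $s\le -2$, the nonempty interval $(s,0)\cap\mathbb Z$ produces extra necessary conditions $c_k = 0$ that are not implied by the $\mathcal S_s$-symmetry; this rules out surjectivity.

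Finally, to turn the formal Fourier solution $g$ into an actual element of $\mathcal A^{k,\alpha}$, observe that modulo a polynomial of degree at most $\max(s,0)$, $g$ is (a shift of) the Cauchy projection of $w\in\mathcal C_\C^{k,\alpha}$; Privalov's theorem on the boundedness of this projection on Hölder spaces then yields the required regularity. The main obstacle, I expect, will be the bookkeeping in the Fourier step: verifying that the symmetry $c_n = (-1)^m\bar c_{s-n}$ eliminates the apparent overdetermination in the solvable range while genuinely leaving the $c_k=0$ constraints as independent obstructions when $s\le -2$, paying particular attention to the self-paired index $k = s/2$ according to the (coupled) parities of $m$ and $s$.
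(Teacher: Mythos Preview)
Your proposal is correct and shares the paper's opening move: write $f=(1-\zeta)^m g$, $\varphi=(1-\zeta)^m v$, cancel $(1-\zeta)^m$, and reduce $L(f)=\varphi$ to an equation of the form $g+(-1)^m\zeta^s\bar g=w$ with $s=2r-m$. From that point the two arguments diverge in presentation. The paper splits on the parity of $m$: for $m=2m'$ it multiplies by $\zeta^{m'}$ to reach the classical real-data problem $\real[\zeta^{-(r-m')}f']=u$ and cites Globevnik for the solvability criterion $r-m'\ge 0$; for $m=2m'+1$ it constructs a solution from the Szeg\H{o} projection of $u$ when $r-m'\ge 0$ and, for $r-m'<0$, exhibits an explicit element of $\mathcal R_1$ outside the image via an integral orthogonality argument. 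Your route instead treats both parities at once by passing to the space $\mathcal S_s$ and reading off solvability from the Fourier coefficients, with the threshold $s\ge -1$ appearing as the condition that $\{k\ge 0\}\cup\{k\le s\}=\mathbb Z$. This is more self-contained (no appeal to \cite{gl2}) and makes the obstruction for $s\le -2$ completely explicit as the vanishing of the coefficients $c_k$ for $s<k<0$; the paper's version, by contrast, stays closer to the standard scalar Riemann--Hilbert formulation and absorbs the regularity question into the cited results rather than invoking Privalov directly. Both arguments are short; yours has the advantage of uniformity, while the paper's has the advantage of reducing to problems already named in the literature.
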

\begin{proof}
Let $\varphi \in \mathcal{C}_{0^m}^{k,\alpha}$. Write $\varphi=(1-\zeta)^{m}v$ where  $v\in \mathcal R_m$ (see Lemma \ref{subspaces}). We need to study the following equation:
\begin{equation*}
\zeta^{-r} f +  \zeta^{r} \overline f = \varphi
\end{equation*}
for $f \in \mathcal A^{k,\alpha}_{0^m}$. Writing $f=(1-\zeta)^mg$ with $g \in \mathcal{A}^{k,\alpha}$ reduces to
\begin{equation}\label{eqsur}
\zeta^{-r} g +(-1)^m \zeta^{r-m}\overline g = v.
\end{equation}   
We distinguish two cases:

\noindent \underline{First case:} $m=2m'$ is even. 
In such case, Equation (\ref{eqsur}) is equivalent to
\begin{equation}\label{eqsureven}
\zeta^{-(r-m')} g + \zeta^{r-m'}\overline g = \zeta^{m'}v.
\end{equation}   
Notice that  $u\to \zeta^{m'}u$ maps $\mathcal R_m$ isomorphically to $\mathcal C^{k,\alpha}$, see Lemma \ref{subspaces}. Equation (\ref{eqsureven}) is classical 
and was treated by J. Globevnik in  \cite{gl2} for instance (see also \cite{ga,ve,we}). Equation (\ref{eqsureven}) is solvable for any function $v \in \mathcal R_m$ if and only if $r-m'\geq0$.
%There exists such a function $g$ if and only if $r-m'\geq 0$.      

\noindent \underline{Second case:} $m=2m'+1$ is odd. In this case, Equation (\ref{eqsur}) is equivalent to
\begin{equation}\label{eqsurodd}
\zeta^{-(r-m')} g - \zeta^{r-m'} \overline {\zeta g} = \zeta^{m'}v.
\end{equation}   
Set $u=\zeta^{m'}v$. By Lemma \ref{subspaces} follows that $u \in \mathcal R_1$. We write $u = u' + u''$, where $u'= \mathcal P(u)\in \mathcal A^{k,\alpha}$, $\mathcal P$ being the Szeg\"o projection. Since $u=-\overline{\zeta u}$, we have $u'' = -\overline{\zeta u'}$. If $r-m' \geq 0$, then 
$g= \zeta^{r-m'}u' \in \mathcal A^{k,\alpha}$ and satisfies (\ref{eqsurodd}). 
If $r-m' < 0$, then 
$$\int \zeta^{-(r-m')} g \ d\theta = \int \zeta^{r-m'-1}\overline g \ d\theta = 0$$ and so, for 
instance, $1 - \overline \zeta \in \mathcal R_1$ is not in the image.
\end{proof}
 
\begin{lemma}\label{lemsur2}
Let $r_1,r_2$ be integers. Set 
$$P(\zeta)= \left(\begin{array}{cc} 1+\zeta & -i(1-\zeta) \\  i(1-\zeta)& 1+\zeta \end{array}\right)
\left(\begin{array}{cc} \zeta^{-r_1} & 0 \\ 0 & \zeta^{-r_2} \end{array}\right).$$ 
Consider the operator $T: \left(\mathcal A^{k,\alpha}_{0^m}\right)^2 \to 
\left(\mathcal{C}_{0^m}^{k,\alpha}\right)^2$ defined by 
$$T({\bm f})=2\real [P{\bm f}]|_{b\Delta}.$$
Then $T$ is onto if and only if  $2r_1\geq m$ and $2r_2\geq m$. 
\end{lemma}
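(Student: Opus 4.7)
The plan is to reduce the $2\times 2$ system $\real[Pf]=\varphi$ to two independent scalar Riemann-Hilbert problems of the type handled by Lemma~\ref{lemsur1}, exploiting the structure of the matrix $M:=\begin{pmatrix}1+\zeta & -i(1-\zeta)\\ i(1-\zeta)& 1+\zeta\end{pmatrix}$ entering $P=MD$ with $D:=\mathrm{diag}(\zeta^{-r_1},\zeta^{-r_2})$. The key elementary facts are the identity $\overline M=\zeta^{-1}M$ on $b\Delta$, immediate from $1\pm\bar\zeta=\pm\zeta^{-1}(1\pm\zeta)$, and the non-vanishing of $\det M=4\zeta$ on $b\Delta$.

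First, using Lemma~\ref{subspaces}(i), I would write $f=(1-\zeta)^m h$ with $h=(h_1,h_2)\in(\mathcal A^{k,\alpha})^2$ and $\varphi=(1-\zeta)^m v$ with $v\in(\mathcal R_m)^2$. The identity $\overline{(1-\zeta)^m}=(-1)^m\zeta^{-m}(1-\zeta)^m$, together with $\overline P=\zeta^{-1}M\overline D$, transforms $\real[Pf]=\varphi$ into $Mu=2v$ on $b\Delta$ after dividing out the common $(1-\zeta)^m$ factor, where
\[
u_j:=\zeta^{-r_j}h_j+(-1)^m\zeta^{r_j-m-1}\bar h_j,\qquad j=1,2.
\]
Since $M$ is pointwise invertible on $b\Delta$, this decouples as $u_j=\tilde v_j$ with $\tilde v:=2M^{-1}v$. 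A direct check gives $\overline{\tilde v_j}=(-1)^m\zeta^{m+1}\tilde v_j$, which matches the intrinsic symmetry of $u_j$, and $v\mapsto\tilde v$ is a bijection between $(\mathcal R_m)^2$ and the corresponding space for $\tilde v$. Hence $T$ is surjective if and only if, for each $j$, the scalar equation $\zeta^{-r_j}h_j+(-1)^m\zeta^{r_j-m-1}\bar h_j=\tilde v_j$ admits $h_j\in\mathcal A^{k,\alpha}$ for every admissible $\tilde v_j$.

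Each scalar equation I would analyze in the spirit of the proof of Lemma~\ref{lemsur1}, splitting on the parity of $m$. For $m=2m'+1$ odd, multiplying by $\zeta^{m'+1}$ yields $2i\,\imag[\zeta^{-(r_j-m'-1)}h_j]=\zeta^{m'+1}\tilde v_j$ whose right-hand side is purely imaginary, so setting $h'_j:=-ih_j$ produces a real-part equation $\real[\zeta^{-(r_j-m'-1)}h'_j]=\phi_j/2$ for real $\phi_j$; Lemma~\ref{lemsur1} with its parameter $m$ set to $0$ then gives solvability iff $r_j-m'-1\geq 0$, i.e.\ $2r_j-m\geq 1$, equivalent to $2r_j-m\geq 0$ for odd $m$. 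For $m$ even, the analogous multiplier has half-integer exponent, so I pass to the double cover $\zeta=\eta^2$: with $H_j(\eta):=h_j(\eta^2)$ (even holomorphic) and $W_j(\eta):=\eta^{m+1}\tilde v_j(\eta^2)$ (real and odd on $b\Delta_\eta$, by the last statement of Lemma~\ref{subspaces}), multiplication by $\eta^{m+1}$ produces $2\real[\eta^{-s_j}H_j]=W_j$ with $s_j:=2r_j-m-1$ odd. A Fourier expansion $H_j(\eta)=\sum_{n\geq 0}c_n\eta^{2n}$ shows that $\real[\eta^{-s_j}H_j]$ realizes Fourier modes $2n-s_j$ and $s_j-2n$ ($n\geq 0$); since $s_j$ is odd, these exhaust every odd integer iff $s_j\geq -1$, equivalently $2r_j-m\geq 0$.

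The main obstacle is the even-$m$ case, in which one cannot apply Lemma~\ref{lemsur1} verbatim because of the parity constraint on $H_j$; the short Fourier counting above fills this gap. The necessary direction (non-surjectivity when some $2r_j-m<0$) follows by exhibiting a Fourier mode of $W_j$ outside the image of the corresponding operator, which via the bijection $v\leftrightarrow\tilde v$ yields a $\varphi$ not in the image of $T$.
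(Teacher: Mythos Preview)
Your proof is correct and takes a genuinely different route from the paper's. The paper keeps the matrix $P$ intact and splits on the parity of $m$ in the opposite way: for $m$ even it invokes Globevnik's classical result directly on the $2\times 2$ system, while for $m$ odd it passes to the double cover $\zeta=\xi^2$ and then diagonalizes using the rotation matrix $\begin{pmatrix}\real\xi & \imag\xi\\ -\imag\xi & \real\xi\end{pmatrix}$, reducing to two instances of Lemma~\ref{lemsur1}. By contrast, you diagonalize \emph{first} via the identity $\overline M=\zeta^{-1}M$ on $b\Delta$, which immediately decouples the system into scalar equations $\zeta^{-r_j}h_j+(-1)^m\zeta^{r_j-m-1}\bar h_j=\tilde v_j$; the extra $\zeta^{-1}$ picked up here is exactly why your parity split is inverted relative to the paper's. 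Your approach is arguably cleaner conceptually---the decoupling is one line---and is self-contained in that it avoids the black-box citation of Globevnik. The trade-off is that your even-$m$ case cannot quote Lemma~\ref{lemsur1} verbatim because of the evenness constraint on $H_j(\eta)=h_j(\eta^2)$, so you fill this with a short Fourier-mode argument; to make that step fully rigorous in $\mathcal C^{k,\alpha}$ you should construct the solution explicitly via the Szeg\H{o} projection (e.g.\ for $s_j\geq -1$ odd, take $H_j=\eta^{s_j}\mathcal P(W_j)$ when $s_j\geq 1$ and $H_j=\eta^{-1}\mathcal P(W_j)$ when $s_j=-1$), rather than only counting Fourier supports. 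One minor inaccuracy: your parenthetical appeal to the last statement of Lemma~\ref{subspaces} is misplaced, since your $\tilde v_j$ satisfies $\overline{\tilde v_j}=(-1)^m\zeta^{m+1}\tilde v_j$ rather than the $\mathcal R_m$ condition; however, your direct verification that $W_j$ is real and odd stands on its own, so this does not affect the argument.
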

\begin{proof}
Let ${\bm \varphi} \in \left(\mathcal{C}_{0^m}^{k,\alpha}\right)^2$. Write $\bm \varphi=(1-\zeta)^{m}\bm v$ where  
$\bm v\in \left(\mathcal{C}_{\C}^{k,\alpha}\right)^2$ and ${\bm f}=(1-\zeta)^m{\bm g}$ with ${\bm g} \in \left(\mathcal{A}^{k,\alpha}\right)^2$. We need to study the following equation:
\begin{equation*}
P {\bm g} +(-1)^m \zeta^{-m}\overline {P{\bm g}} = \bm v.
\end{equation*}   

\noindent \underline{First case:} $m=2m'$ is even. In that case, we have 
\begin{equation}\label{eqsur'2}
\zeta^{m'}P {\bm g} +\zeta^{-m'}\overline {P{\bm g}} = \zeta^{m'} \bm v,
\end{equation}  
which was treated by J. Globevnik \cite{gl2}. In particular, (\ref{eqsur'2}) admits a solution if and only if $r_1-m'\geq 0$ and $r_2-m'\geq 0$. 

\noindent \underline{Second case:} $m=2m'+1$ is odd. We have
\begin{equation*}
\zeta^{m'}P {\bm g} - \zeta^{-m'-1}\overline {P{\bm g}} = \zeta^{m'}\bm v.
\end{equation*}  
Following J. Globevnik, we make the substitution $\zeta = \xi^2$ and get 
\begin{equation*}
\xi^{m}P(\xi^2) {\bm g}(\xi^2) - \xi^{-m}\overline {P(\xi^2){\bm g}(\xi^2)} = \xi^{m}\bm v(\xi^2),
\end{equation*} 
After multiplying by $i$
\begin{equation*}
2 \real \left[ \xi^{m}P(\xi^2) i{\bm g}(\xi^2) \right] = i\xi^{m}\bm v(\xi^2),
\end{equation*} 
which becomes
\begin{equation}\label{eqsur'4}
4 \real \left[
 \left(\begin{matrix}i \xi^{-(2r_1-m-1)} g_1(\xi^2)    \\
i \xi^{-(2r_2-m-1)} g_2(\xi^2)  \\
\end{matrix}\right) \right] = i\left(\begin{array}{cc} \real \xi  & \imag \xi \\ -\imag \xi & \real \xi \end{array}\right)\xi^{m}\bm v(\xi^2).
\end{equation} 
Notice that, according to Lemma \ref{subspaces}, whenever $u \in \mathcal{R}_m$ then $2i\xi^mu(\xi^2)\in \mathcal C^{k,\alpha}_o$ and that moreover the map $\bm u \mapsto \left(\begin{matrix}
\real \xi &   \imag \xi      \\

-\imag\xi  &  \real\xi   \\
\end{matrix}\right)\bm u $ is an isomorphism between $(\mathcal C^{k,\alpha}_o)^2$ and $(\mathcal C^{k,\alpha}_e)^2$. Thus, (\ref{eqsur'4}) reduces to a pair of one-dimensional problems
\[\xi^{-(2r_j - m - 1)} g_j(\xi^2) + \xi^{2r_j-m-1}\overline g_j(\xi^2) = u_j(\xi)\]
with $u_j\in \mathcal C^{k,\alpha}_e$, $j=1,2$. Writing $u_j(\xi) = u_j'(\xi^2)$ with 
$u'_j\in \mathcal C^{k,\alpha}$, this equation is in turn equivalent to
\[\zeta^{-(2r_j - m - 1)/2} g_j(\zeta) + \zeta^{(2r_j-m-1)/2}\overline g_j(\zeta) = u'_j(\zeta).\] 
This problem is of the form considered in Lemma \ref{lemsur1}, and the surjectivity is equivalent to $2r_1-m -1\geq 0$, and $2r_2-m -1\geq 0$. Since 
$m$ is odd, this concludes the proof.
\end{proof}

\vspace{0.5cm}
We now prove $(ii)$.   
Assume that 
$$2\real \left[M{\bm f}\right]=0$$
on $b \Delta$. 
The disc ${\bm f} \in \left(\mathcal A^{k,\alpha}_{0^m}\right)^{N}$ satisfies 
$${\bm f}=-M^{-1}\overline{M}\overline{{\bm f}}=-\left(\begin{array}{cccc}\zeta^{\kappa_1}& & & (0) \\ &
\zeta^{\kappa_2} & & \\ & & \ddots & \\ (0)& & &\zeta^{\kappa_{N}}\end{array}\right)\overline{{\bm f}}.$$
The determination of the kernel thus reduces to the one dimensional problem 
\begin{equation}\label{eqrh0}
f+\zeta^l\overline{f}=0.
\end{equation}
for $f=(1-\zeta)^m g \in \mathcal{A}^{k,\alpha}_{0^m} $ and $l\geq m-1$. 
This equation can be written as 
$$
g+(-1)^m\zeta^{l-m}\overline{g}=0.
$$
It is immediate and classical (see for instance \cite{ga,gl2,ve,we}) that solutions are of the form 
$g(\zeta)=\sum_{k=0}^{l-m} a_k \zeta^k$ with $a_k=(-1)^{m+1}\overline{a_{l-m-k}}$ for $k=0,\cdots,l-m$. Therefore the space of solutions of (\ref{eqrh0}) has real dimension $l+1-m.$  This ends the proof of Theorem \ref{theorh}.

\qed
       
\begin{remark}
In relation with Theorem \ref{theorh} and its proof, note the work of M. \v{C}erne  \cite{ce2} in the framework of non-trivial bundles over the boundary of a given disc, that is when the corresponding Maslov index is odd. 

\end{remark}

\vskip 1cm
{\small
\noindent Florian Bertrand\\
Department of Mathematics, Fellow at the  Center for Advanced Mathematical Sciences\\
American University of Beirut, Beirut, Lebanon\\{\sl E-mail address}: fb31@aub.edu.lb\\

\noindent Giuseppe Della Sala \\
Department of Mathematics, Fellow at the  Center for Advanced Mathematical Sciences\\
American University of Beirut, Beirut, Lebanon\\{\sl E-mail address}: 	gd16@aub.edu.lb\\
}

\end{document}